\theoremstyle{plain}
\newtheorem{propn}{Proposition}[section]
\newtheorem{thm}[propn]{Theorem}
\newtheorem{cor}[propn]{Corollary}
\theoremstyle{definition}
\theoremstyle{remark}
\newtheorem*{rems}{Remarks}
\newcommand{\ve}{\varepsilon}
\newcommand{\varphitilde}{\wt{\varphi}}
\newcommand{\varphiol}{\ol{\varphi}}
\newcommand{\toy}{\Xi}
\newcommand{\Rbar}{\mathcal{R}}
\newcommand{\Ebar}{\mathcal{E}}
\newcommand{\Rmap}{\mathcal{R}}
\newcommand{\Al}{\mathsf{A}}
\newcommand{\Bilol}{\ol{\mathsf{B}}}
\newcommand{\Bil}{\mathsf{B}}     %use for operator spaces
\newcommand{\Hil}{\mathsf{H}}
\newcommand{\hil}{\mathsf{h}}
\newcommand{\Kil}{\mathsf{K}}
\newcommand{\kil}{\mathsf{k}}
\newcommand{\Vil}{\mathsf{V}}
\newcommand{\Dh}{\mathcal{S}_h}
\newcommand{\cDh}{\Sigma_h}
\newcommand{\Exps}{\mathcal{E}}
\newcommand{\Fock}{\mathcal{F}}
\newcommand{\FFock}{\mathcal{F}}
\newcommand{\Cstar}{C^*}
\newcommand{\coproduct}{\Delta}
\newcommand{\counit}{\epsilon}
\newcommand{\norm}[1]{\lVert #1 \rVert}
\newcommand{\kilhat}{\wh{\kil}}
\newcommand{\hilhat}{\wh{\hil}}
\newcommand{\Hilhat}{\wh{\Hil}}
\newcommand{\khat}{\wh{\kil}}
\newcommand{\hhat}{\wh{\hil}}
\newcommand{\chat}{\wh{c}}
\newcommand{\Real}{\mathbb{R}}
\newcommand{\Rplus}{\Real_+}
\newcommand{\Comp}{\mathbb{C}}
\newcommand{\Nat}{\mathbb{N}}
\newcommand{\ZZ}{\mathbb{Z}}
\newcommand{\dQS}{\Delta^{\mathrm{QS}}}
\newcommand{\cb}{{\text{\tu{cb}}}}
\newcommand{\CPstrict}{CP_{\beta}}
\newcommand{\CBuw}{CB_{\sigma}}
\newcommand{\wh}{\widehat}
\newcommand{\wt}{\widetilde}
\newcommand{\ol}{\overline}
\newcommand{\ul}{\underline}
\newcommand{\ot}{\otimes}
\newcommand{\otM}{\otimes_{\mathrm{M}}}
\newcommand{\otol}{\,\ol{\otimes}\,}
\newcommand{\otul}{\,\ul{\otimes}\,}
\newcommand{\conv}{\star}
\newcommand{\op}{\oplus}
\newcommand{\la}{\langle}
\newcommand{\ra}{\rangle}
\newcommand{\tu}{\textup}
\DeclareMathOperator{\Lin}{Lin}
\DeclareMathOperator{\id}{id}
\newenvironment{alist}
{

\begin{enumerate}}
{\end{enumerate}}
\newenvironment{rlist}
{

\begin{enumerate}}
{\end{enumerate}}
\numberwithin{equation}{section}
\begin{document}

\title
[Quantum random walk approximation] {Quantum random walk approximation \\
on locally compact quantum groups}

\author[Lindsay]{J.\ Martin Lindsay}
\address{Department of Mathematics and Statistics,
Lancaster University, Lancaster LA1 4YF, U.K.} \email{j.m.lindsay@lancaster.ac.uk}
\author[Skalski]{Adam G.\ Skalski}
\address{Institute of Mathematics of the Polish Academy of Sciences,
ul.\'Sniadeckich 8, 00-956 Warszawa, Poland} \email{a.skalski@impan.pl}

\subjclass[2000]{Primary 46L53, 81S25; Secondary 22A30, 47L25,
16W30} \keywords{Quantum random walk, quantum L\'evy process,
% noncommutative probability, %quantum stochastic,
%locally compact quantum group,
$C^*$-bialgebra, %$W^*$-bialgebra,
stochastic
cocycle.}

\begin{abstract}
A natural scheme is established for the approximation of quantum
L\'evy processes on locally compact quantum groups by quantum random
walks. We work in the somewhat broader context of discrete
approximations of completely positive quantum stochastic convolution
cocycles on $C^*$-bialgebras.
\end{abstract}

\maketitle

\section*{Introduction}
\label{Section: Introduction}

In~\cite{QSCC3} we developed a theory of quantum stochastic convolution cocycles on counital multiplier $C^*$-bialgebras,
extending the \emph{algebraic} theory of quantum L\'evy processes created by Sch\"urmann and coworkers
(see ~\cite{Schurmann} and references therein) %~\cite{FSc},~\cite{Uwe},)
and the \emph{topological} theory of quantum stochastic convolution cocycles on compact quantum
groups and operator space coalgebras developed by the authors
%([$\text{LS}_{0,2}$]).
(\cite{QSCC2}). % see also~\cite{QSCC0} where the algebraic and topological are compared.
Here we apply the results of \cite{QSCC3}
to introduce and analyse a straightforward scheme for the approximation of such cocycles by quantum random walks. In particular
we obtain results on Markov-regular quantum L\'evy processes on locally compact quantum semigroups, extending and strengthening
results in~\cite{FrS} for the compact case. Our analysis exploits a recent approximation theorem of Belton (\cite{Bel}), which
extends that of~\cite{Sah} (used in~\cite{FrS}). The approximation scheme closely mirrors the way in which Picard iteration
operates in the construction of solutions of quantum stochastic differential equations (see~\cite{Lgreifswald}).

The study of quantum random walks on quantum groups was intitiated by Biane in the early nineties (starting from \cite{Bi1}). Some combinatorial, probabilistic and physical interpretations can be found in Chapter 5 of \cite{Majid}.
Recent work has concentrated on discrete quantum groups and the development of (Poisson and Martin) boundary theory for quantum
random walks (see ~\cite{NeTuset} and references therein). %~\cite{Benoit},~\cite{VVa},~\cite{VVe}).
Random walks of the type
considered here and in~\cite{FrS} are discussed in~\cite{FrG} in the context of finite quantum groups. For \emph{standard}
quantum stochastic cocycles on operator algebras and operator spaces (see \cite{Lgreifswald}, and references therein), quantum
random walk approximation (\cite{LPrw}, %~\cite{Sinha},
\cite{Sah}, %~\cite{GoS}, [$\text{Be}_{1,2}$]
\cite{Bel}) has seen recent applications
in the probability theory and mathematical physics literature (e.g.~\cite{AtP},~\cite{BvH}).

\section{Preliminaries}
\label{Section: Extensions}
 In this section we briefly recall some
definitions and relevant facts about strict maps and their extensions, matrix spaces over an operator space, structure maps with
respect to a character on a $C^*$-algebra, multiplier $C^*$-bialgebras and quantum stochastic convolution cocycles; we refer to
\cite{QSCC3} for a  detailed account.

\emph{General notations.} The multiplier algebra of a $C^*$-algebra $\Al$ is denoted by $M(\Al)$ (note that in \cite{QSCC3} $\tilde{\Al}$ was used). The symbols $\otul$, $\ot$ and
$\otol$ are used respectively for linear/algebraic, spatial/minimal and ultraweak tensor products, of spaces and respecively,
linear, completely bounded and ultraweakly continuous completely bounded maps.  For a subset $S$ of a vector space $V$ we denote
its linear span by $\Lin S$.
 For a Hilbert space $\hil$, we have the ampliation
\begin{equation*}
%\label{iotahatDelta}
\iota_\hil: B(\Hil; \Kil)\to B(\Hil\ot\hil; \Kil\ot\hil), \ T\mapsto
T\ot I_\hil
\end{equation*}
where context determines the Hilbert spaces $\Hil$ and $\Kil$; we
also use the notations
\begin{equation}
\label{0.2}
 \hilhat:= \Comp\oplus\hil,\ \ \chat:= \binom{1}{c}
\text{ for } c\in\hil \text{ and } \dQS := P_{\{0\}\oplus\hil} =
\begin{bmatrix} 0 & \\ & I_{\hil}
\end{bmatrix} \in B(\hilhat)
\end{equation}
(the superscript $QS$ is there to avoid confusion with coproducts).

\subsection{Strict maps and their extensions}
If $\Al_1, \Al_2$ are $C^*$-algebras then a map $\varphi: \Al_1 \to M(\Al_2)$ is called strict if it is bounded and continuous in
the strict topology on bounded subsets. The space of all such maps is denoted $B_{\beta}(\Al_1;M(\Al_2))$.  Each  map $\phi \in
B_{\beta}(\Al_1;M(\Al_2))$ has a unique strict extension $\tilde{\varphi}:M(\Al_1) \to M(\Al_2)$. This allows the natural
composition operation: if $\psi \in B_{\beta}(\Al_2;M(\Al_3))$ for another $C^*$-algebra $\Al_3$ we define $\psi \circ \varphi:=
\tilde{\psi} \circ \varphi$. A map $\varphi \in B_{\beta}(\Al_1;M(\Al_2))$ is called \emph{preunital} if its strict extension is
unital.
  Thus,
when $\varphi$ is *-homomorphic, preunital is equivalent to nondegenerate (\cite{Lance}).

It is shown in Section 1 of \cite{QSCC3} that every completely bounded map from a $C^*$-algebra to the algebra of all bounded
operators on a Hilbert space (understood as the multiplier algebra of the algebra of compact operators) is automatically strict.

\subsection{Matrix spaces}
 For an operator space $\Vil$ in $B(\Hil; \Kil)$ and
full operator space $B = B(\hil; \kil)$, the
$(\hil,\kil)$-\emph{matrix space over} $\Vil$, denoted $\Vil\otM B$,
is
\[
\{ A \in B(\Hil\ot\hil, \Kil\ot\kil): \forall_{\omega \in B_*} \
(\id_{B(\Hil;\Kil)} \otol \omega)(A) \in\Vil \}.
\]
It is an operator space lying between $\Vil\otul B$ and $\ol{\Vil} \otol B$ which is equal to the latter when $\Vil$ is
ultraweakly closed. For any map $\varphi \in CB(\Vil_1; \Vil_2)$, between operator spaces, the map $\varphi \otul \id_B$ extends
uniquely to a completely bounded map $\varphi \otM \id_B: \Vil_1\otM B \to \Vil_2\otM B$ (\cite{LWexistence}). This construction
is compatible with strict tensor products and strict extension, in the sense described in Section 1 of \cite{QSCC3}.

%%%%%%%%%%%%%%%%%%%%%%%%%%%%%%%%%%%%%%%%%%%%%%%
%%%%%%%%%%%%%%%%%%%%%%%%%%%%%%%%%%%%%%%%%%%%%%%
\subsection{$\chi$-structure maps}
Let $(\Al,\chi)$ be a $\Cstar$-algebra with character. A
$\chi$-\emph{structure map on} $(\Al,\chi)$ is a linear map $\varphi
: \Al \to B(\hilhat)$, for some Hilbert space $\hil$, satisfying
\begin{equation*}
%\label{Equation: chi structure relation}
\varphi (a^*b) =
\varphi(a)^* \chi (b) + \chi(a)^* \varphi(b) +
\varphi (a)^* \Delta^{\tu{QS}} \varphi (b),
\end{equation*}
where $\dQS$ is given by~\eqref{0.2}.
%denotes the orthogonal projection $\left[\begin{smallmatrix} 0 & \\
%& I_{\hil}
%\end{smallmatrix}\right]
% \in B(\hilhat)$ (no relation to coproducts).
The following automatic implementability result, which is
established in~\cite{QSCC2}, is key.

\begin{thm} \label{established}
Let $(\Al,\chi)$ be a $\Cstar$-algebra with
character and let $\varphi$ be a linear map $\Al \to B(\hhat)$, for some
Hilbert space $\hil$. Then the following are equivalent.
\begin{rlist}
\item
$\varphi$ is a $\chi$-structure map.
\item
$\varphi$ is \emph{implemented} by a pair $(\pi, \xi)$ consisting of a
*-homomorphism $\pi: \Al \to B(\hil)$ and vector $\xi\in\hil$, that is
$\varphi$ has block matrix form
%\[
%\begin{bmatrix} \la \xi | \\ I_{\hil} \end{bmatrix}
%\nu( \cdot )
%\begin{bmatrix} | \xi \ra & I_{\hil} \end{bmatrix}
%\text{ where }
%\nu := \pi - \iota_\hil \circ \chi,
%\]
%in other words,
\begin{equation}
\label{Equation: block} a \mapsto
\begin{bmatrix} \gamma(a) & \la \xi | \nu(a) \\
\nu(a) | \xi \ra & \nu(a) \end{bmatrix}
%\ (a\in\Al)
\text{ where }
\gamma:= \omega_\xi \circ \nu \text{ for } \nu := \pi - \iota_\hil
\circ \chi.
\end{equation}
\end{rlist}
Moreover, if $\varphi$ is a $\chi$-structure map with such a block matrix form
then it is necessarily strict, and
$\pi$ is nondegenerate if and only if $\varphitilde (1) = 0$.
\end{thm}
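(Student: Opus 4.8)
The plan is to translate the structure-map identity into statements about the four blocks of $\varphi$ relative to the decomposition $\hhat=\Comp\oplus\hil$; throughout I write
\[
\varphi(a)=\begin{bmatrix} k(a) & r(a) \\ d(a) & w(a)\end{bmatrix},\qquad k(a)\in\Comp,\ \ d(a)\in\hil,\ \ w(a)\in B(\hil),
\]
with $r(a)$ the associated functional on $\hil$, and I abbreviate $\nu:=\pi-\iota_\hil\circ\chi$. The implication \tu{(ii)}$\Rightarrow$\tu{(i)}, which I expect to be routine, I would obtain by direct substitution: the only input needed is that $\pi$ is a $*$-homomorphism and $\chi$ a character, which together give the identity $\nu(a^*b)=\nu(a)^*\nu(b)+\chi(a)^*\nu(b)+\nu(a)^*\chi(b)$. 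Because $\dQS$ restricts the operator product in the last term of \tu{(i)} to the $\hil$-corner, inserting this identity collapses each of the four blocks of $\varphi(a)^*\chi(b)+\chi(a)^*\varphi(b)+\varphi(a)^*\dQS\varphi(b)$ onto the matching block of $\varphi(a^*b)$, the corner entry using in addition $\gamma=\omega_\xi\circ\nu$.

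For the substantial implication \tu{(i)}$\Rightarrow$\tu{(ii)} I would first read off the block form of the structure-map equation. Its $(1,1)$-entry reads $w(a^*b)=\chi(a)^*w(b)+w(a)^*\chi(b)+w(a)^*w(b)$, and on setting $\pi:=\iota_\hil\circ\chi+w$ and using $\chi(a^*b)=\chi(a)^*\chi(b)$ this is exactly $\pi(a^*b)=\pi(a)^*\pi(b)$. Since, by Cohen factorisation, every element of $\Al$ is of the form $a^*b$, this single identity forces $\pi$ to be both $*$-preserving and multiplicative, hence a $*$-homomorphism; in particular $\pi$ is automatically contractive and $w=\pi-\iota_\hil\circ\chi$ is bounded. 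The $(1,0)$- and $(0,0)$-entries give, after the same rearrangement, the cocycle relations $d(a^*b)=\pi(a)^*d(b)+\chi(b)\,r(a)^*$ and $k(a^*b)=\chi(a)^*k(b)+k(a)^*\chi(b)+\ip{d(a)}{d(b)}$.

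Constructing the implementing vector $\xi$ is the step I expect to be the main obstacle: it is precisely the assertion that the cocycle $d$ is a coboundary, $d(a)=\nu(a)\xi$. The mechanism I would use is the positivity hidden in \tu{(i)}: taking $b=a$ gives $\varphi(a^*a)=\chi(a)^*\varphi(a)+\varphi(a)^*\chi(a)+\varphi(a)^*\dQS\varphi(a)$, whose last summand $\varphi(a)^*\dQS\varphi(a)=(\dQS\varphi(a))^*(\dQS\varphi(a))$ is positive; restricted to $\ker\chi$ this yields $\ip{d(a)}{d(b)}=k(a^*b)$ together with $d(a^*b)=\pi(a)^*d(b)$. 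A GNS/Stinespring-type argument then produces $\xi$: one defines the implementing functional on the essential subspace $\ol{\pi(\Al)\hil}$ and uses the above positivity as the Cauchy--Schwarz estimate guaranteeing that it is well defined and bounded, whence $\xi$ exists by Riesz representation. The delicate points are exactly this well-definedness and norm estimate, together with checking that the single vector $\xi$ simultaneously reproduces $d(a)=\nu(a)\xi$, $r(a)=\bra{\xi}\nu(a)$ and $k(a)=\omega_\xi(\nu(a))=\gamma(a)$, the last following from the $(0,0)$-relation once $d$ has been identified.

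Finally, for the supplementary claims: once $\varphi$ is in the implemented block form it is built from the bounded maps $\pi,\chi$ and the fixed vector $\xi$, so it is completely bounded and therefore, as a map into $B(\hhat)=M(K(\hhat))$, automatically strict by the fact recalled above. Passing to the strict extension and inspecting the $(1,1)$-block, $\varphitilde(1)=0$ holds if and only if $\wt{\pi}(1)-I_\hil=0$, that is, exactly when $\pi$ is nondegenerate; the remaining blocks of $\varphitilde(1)$ then vanish automatically, being $\nu$-images of $1$ paired with $\xi$.
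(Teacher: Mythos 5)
First, a point of reference: the paper does not actually prove this theorem --- it is quoted as ``established in [LS$_1$]'' --- so there is no in-paper argument to compare against line by line. That said, your strategy (block decomposition of $\varphi$ over $\hhat=\Comp\oplus\hil$, reading off the four corner identities, Cohen-type factorisation to turn the $\hil$-corner relation into multiplicativity of $\pi$, and a positivity/Riesz argument for the implementing vector) is the standard route, and the parts you carry out are correct: the verification of \tu{(ii)}$\Rightarrow$\tu{(i)}, the four block identities, the derivation that $\pi:=\iota_\hil\circ\chi+w$ satisfies $\pi(a^*b)=\pi(a)^*\pi(b)$ and hence (via exact factorisation $x=a^*b$, giving $*$-preservation first and then multiplicativity) is a $*$-homomorphism, and the strictness/nondegeneracy remarks at the end.

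The genuine gap is the step you yourself flag: ``a GNS/Stinespring-type argument then produces $\xi$'' is not a proof but a restatement of the hard part --- namely that the bounded $(\pi,\chi)$-cocycle $d$ is a coboundary, which fails for general $*$-bialgebras (Gaussian cocycles) and holds here only by virtue of the $C^*$-structure. To close it one needs, concretely: (1) the observation that $k|_{\ker\chi}$ is a \emph{positive linear functional} on the $C^*$-algebra $K:=\ker\chi$ (every positive element of $K$ is $a^*a$ with $a\in K$, and $k(a^*a)=\|d(a)\|^2\geq 0$), hence automatically bounded --- this is what gives boundedness of $d|_K$ and makes any Cauchy--Schwarz estimate available, since no continuity of $\varphi$ is assumed; (2) an actual construction of $\xi$, e.g.\ as the norm limit of $d(u_\lambda)$ along an approximate identity $(u_\lambda)$ of $K$, the net being Cauchy because $\|d(u_\lambda)-d(u_\mu)\|^2=k((u_\lambda-u_\mu)^2)\to 0$ by positivity of $k|_K$, after which $d(b)=\lim d(bu_\lambda)=\pi(b)\xi=\nu(b)\xi$ for $b\in K$; (3) the passage from $K$ to $\Al=K\oplus\Comp u$ (fix $u$ with $\chi(u)=1$): the relations only yield $\pi(a)(d(u)-\nu(u)\xi)=0$ for all $a$, so when $\pi$ is degenerate $\xi$ must be corrected by a vector orthogonal to $\ol{\pi(\Al)\hil}$; (4) the reality identity $r(a)=d(a^*)^*$, which you list as something to ``check'' but which must be \emph{derived} (by comparing the $(0,1)$- and $(1,0)$-block relations and factorising again) before the cocycle identity for $d$ even takes the two-sided form you use; and (5) the identification $k=\gamma$, which follows because $k-\gamma$ satisfies a point-derivation-type relation at $\chi$ and $C^*$-algebras admit no nonzero bounded point derivations at characters. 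None of these is deep, but together they constitute the theorem; as written, your proposal is a correct outline whose central step is asserted rather than proved.
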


%\section{Multiplier $C^*$-bialgebras}
%\label{Section: Bialgebras}
% In this section we briefly recall the
%definitions and pertinent constructions for bialgebras in the $\Cstar$- and $W^*$- categories and a universal enveloping
% operation linking the two.

%\subsection{$C^*$- and von Neumann bialgebras}

\subsection{Multiplier $C^*$-bialgebras}

A (\emph{multiplier}) \emph{$C^*$-bialgebra} is a $C^*$-algebra $\Bil$ with \emph{coproduct}, that is a nondegenerate
*-homomorphism $\coproduct: \Bil \to M(\Bil\ot\Bil)$ satisfying the coassociativity conditions
\[
(\id_\Bil \ot \coproduct)\circ \coproduct =
(\coproduct \ot \id_\Bil)\circ \coproduct.
\]
A \emph{counit} for $(\Bil,\coproduct)$ is a character $\counit$ on $\Bil$
satisfying the counital property:
\[
(\id_\Bil \ot \counit)\circ \coproduct =
(\counit \ot \id_\Bil)\circ \coproduct = \id_\Bil.
\]

%\begin{rem}
%The above definitions extend those for unital $C^*$-bialgebras, for which
%$\Biltilde = \Bil$ and $\Bil\ottilde\Bil = \Bil\ot\Bil$.

%The strict extension of a coproduct is a unital *-homomorphism and the
%strict extension of a counit is a character on $\Biltilde$.
%Note however that, in general, $(\Biltilde, \coproducttilde)$ is
%\emph{not} itself a $\Cstar$-bialgebra as the inclusion
%$\Biltilde \ot \Biltilde \subset \Bil \ottilde \Bil$
%is usually proper.
%\end{rem}

\noindent
Examples of counital $\Cstar$-bialgebras include locally
compact quantum groups in the universal setting (\cite{Kus2}), in
particular all coamenable locally compact quantum groups are
included.

Let $\Bil$ be a $\Cstar$-bialgebra.
% and let $\phi\in\Lin \CPstrict(\Bil;\Altilde)$ and
The \emph{convolute} of maps $\phi_1\in\Lin \CPstrict(\Bil;M(\Al_1))$ and $\phi_2\in\Lin \CPstrict(\Bil;M(\Al_2))$ for
$\Cstar$-algebras $\Al_1$ and $\Al_2$ is defined by composition of strict maps:
\[
\phi_1\conv\phi_2 = (\phi_1\ot\phi_2)\circ \coproduct \in \Lin \CPstrict(\Bil;M(\Al_1\ot\Al_2));
\]
the same notation is used for its strict extension. The convolution operation is easily seen to be associative.

Note that, by automatic strictness, maps $\varphi_1 \in CB(\Bil;B(\hil_1))$ and $\varphi_2 \in CB(\Bil;B(\hil_2))$, for Hilbert
spaces $\hil_1$ and $\hil_2$, may be convolved:
\begin{equation}
\label{automatic convolution}
\varphi_1\conv\varphi_2 \in
CB(\Bil;B(\hil_1\ot\hil_2)).
\end{equation}

%%%%%%%%%%%%%%%%%%%%%%%%%%%%%%%%%%%%
%%%%%%%%%%%%%%%%%%%%%%%%%%%%%%%%%%%%
%%%%%%%%%%%%%%%%%%%%%%%%%%%%%%%%%%%%

\subsection{Quantum stochastic convolution cocycles} \label{Section: QSCC}

\emph{We now fix, for the rest of the paper}, a complex Hilbert
space $\kil$ refered to as the \emph{noise dimension space} and a
counital $C^*$-bialgebra $\Bil$.

%In this section we first recall
%some standard terminology and notation for quantum stochastic
%calculus, and then summarise the results on coalgebraic QS
%differential equations and QS convolution cocycles from~\cite{QSCC3}
%which are directly relevant for us here.

%For any function $g$ with values in $\kil$ let $\ghat$ denote the
%corresponding function with values in $\kilhat$, defined by
%$\ghat(s):= \widehat{g(s)} = \binom{1}{g(s)}$. Let $\FFock$ denote
%the symmetric Fock space over $L^2(\Rplus;\kil)$ and

For $0\leq r < t \leq \infty$ we let $\FFock_{[r,t[}$ denote the symmetric Fock space over $L^2([r,t[; \kil)$ and write
$I_{[r,t[}$ for the identity operator on $\FFock_{[r,t[}$ and $\Fock$ for $\FFock_{[0,\infty[}$. Also let $\Exps$ denote the
linear span of $\{\ve(g): g\in L^2(\Rplus; \kil)\}$, where $\ve(g)$ denotes the exponential vector
$\big((n!)^{-\frac{1}{2}}g^{\ot n}\big)_{n\geq 0}$  in $\FFock$.

For $\varphi\in CB(\Bil;B(\khat))$, the coalgebraic QS differential
equation
\begin{equation*}
%\label{Equation: coalg QSDE}
dl_t = l_t \conv d\Lambda_\varphi (t),
\quad l_0 = \iota_{\FFock} \circ \counit.
\end{equation*}
has a unique \emph{form} solution, denoted $l^{\varphi}$; it is
actually a \emph{strong} solution.
\begin{comment}
It is given by
\begin{equation}
 \label{rweqn1}
l^{\varphi}_{t, \varepsilon} = \ol{l}^{\varphi}_{t,
\varepsilon}|_{\Bil} \text{ where } \ol{l}^{\varphi}_{t,
\varepsilon} = \Ebar k^{\ol{\phi}}_{t, \varepsilon} \text{ for }
\ol{\phi} = \Rbar \ol{\varphi} = \ol{\Rmap \varphi} \quad (\ve \in
\Exps),
\end{equation}
$k^{\ol{\phi}}$ being the `standard' QS cocycle generated by
$\ol{\phi}\in CB_\sigma(\ol{\Bil}; B(\khat))$,
 that is, the unique weakly regular weak
solution of the QS differential equation
\[
dk_t = k_t \circ d\Lambda_{\ol{\phi}}(t), \quad k_0 = \iota_\FFock
\]
(see~\cite{Lgreifswald}).
\end{comment}
 The process $l^{\varphi}$ is a QS
convolution cocycle on $\Bil$; moreover, conversely any
Markov-regular, completely positive, contractive, QS convolution
cocycle on $\Bil$ is of the form $l^{\varphi}$ for a unique
$\varphi\in CB(\Bil; B(\khat))$. For completely bounded processes
the cocycle relation reads as follows (after some natural
identifications are made):
\[
l_{s+t} = l_s \conv \big( \sigma_s \circ l_t \big), \quad l_0 =
\iota_\FFock \circ \counit, \quad s,t\in\Rplus,
\]
where $(\sigma_s)_{s\geq 0}$ is the semigroup of right shifts on $B(\FFock)$. \emph{Markov-regularity} means that each of the
associated convolution semigroups of the cocycle is norm-continuous. In this situation, the map $\varphi$ is referred to as the
\emph{stochastic generator} of the QS convolution cocycle.  A QS convolution cocycle $l$ is said to be completely positive,
preunital,
*-homomorphic, etc.\ if each $l_t$ has that property. Generators of such cocycles are characterised in Theorem 5.2 of
\cite{QSCC3}.

\section{Approximation by discrete evolutions}
\label{Section: Approximation}

We now show that any Markov-regular, completely positive,
contractive QS convolution cocycle on $\Bil$ may be approximated in
a strong sense by discrete completely positive evolutions, and that
the discrete evolutions may be chosen to be
*-homomorphic and/or preunital, if the cocycle is.
%This extends and strengthens
%results of~\cite{FrS} for quantum L\'evy processes on compact
%quantum semigroups.

Belton's condition (\cite{Bel}) for discrete approximation of
standard Markov-regular QS cocycles (\cite{Lgreifswald}) nicely
translates to the convolution context using the techniques developed
in~\cite{QSCC3}. We show this first. Denote by $\toy_n^{(h)}$ ($h>0,
n\in\Nat$) the injective
*-homomorphism
\[
B\big(\khat^{\ot n}\big) = B(\khat)^{\otol n} \to
B\big(\FFock_{[0,hn[}\big) \ot I_{[hn,\infty[} = \Big(\,
\ol{\bigotimes}_{j=1}^{\, n}
\,B\big(\FFock_{[(j-1)h,jh[}\big)\,\Big) \ot
%B\big(\FFock_{[0,h[}\big) \otol \cdots \otol
%B\big(\FFock_{[(n-1)h,hn[}\big) \ot
I_{[hn,\infty[}
\]
arising from the discretisation of Fock space (\cite{AtP},~\cite{Bel 1}). Thus
\[
\toy_n^{(h)}: A \mapsto D_n^{(h)} A D_n^{(h)*} \ot I_{[hn,\infty[}
\]
where
\begin{align*}
&
D_n^{(h)} :=
%\iota_{\FFock_{[hn,\infty[}} \circ
\bigotimes_{j=1}^n D_{n,j}^{(h)},
\text{ for the isometries }
\\ &
D_{n,j}^{(h)}:
\khat \mapsto \FFock_{[(j-1)h,jh[}, \
\binom{z}{c} \mapsto
\big(z, h^{-1/2}c_{[(j-1)h, jh[}, 0, 0, \cdots \big).
\end{align*}
Also write $\toy_{n,\ve}^{(h)}$ for the completely bounded map
\[
B(\kilhat^{\ot n}) \to |\FFock\ra, \ \ A \mapsto \toy_n^{(h)}(A)
|\ve\ra, \quad \text{ where } h > 0 , n\in\mathbb{N} \text{ and }
\ve\in\Exps.
\]

For a map $\Psi\in CB\big( \Vil; \Vil\otM B(\khat)\big)$, in which
$\Vil$ is a concrete operator space, its \emph{composition iterates}
$(\Psi^{\bullet n})_{n\in\ZZ_+}$ are defined recursively by
\[
\Psi^{\bullet 0} := \id_\Vil, \quad \Psi^{\bullet n}:= \big(
\Psi^{\bullet (n-1)} \otM \id_{B(\khat)} \big) \circ \Psi \in
CB\big( \Vil; \Vil\otM B(\khat^{\ot n})\big), \quad n\in\Nat.
\]
Similarly, for a map $\psi \in CB\big(\Bil; B(\khat)\big)$, its
\emph{convolution iterates} $(\psi^{\conv n})_{n\in\ZZ_+}$ are
defined by
\[
\psi^{\conv 0} := \counit, \quad \psi^{\conv n} = \psi^{\conv (n-1)}
\conv \psi \in CB\big( \Bil; B(\khat^{\ot n})\big) \quad (n\in\Nat).
\]
As usual we are viewing $B(\khat^{\ot n})$ as the multiplier algebra
of $K(\khat^{\ot n})$ here, and invoking the remark
containing~\eqref{automatic convolution}, to ensure meaning for the
convolutions.
% $\psi_{n-1} \conv \psi$.
%:= \big(\psi_{n-1} \ottilde \psi\big) \circ \coproduct$.
%In short,
%$\psi_n = \psi^{\conv n}$ for $n\in\ZZ_+$.
\begin{comment}
The two forms of iteration enjoy an easy compatibility: for $\psi
\in CB\big(\Bil; B(\khat)\big)$,
\begin{equation}
\label{Equation: ease}
\Rmap \psi^{\conv n} = \Psi^{\bullet n}
%\psi_n =
%R_{K(\khat^{\ot n})} \psi_n =
%\Psi_n
\text{ where }
\Psi :=
\Rmap  \psi.
%R_{K(\khat)} \psi.
\end{equation}
\end{comment}

We need the following block matrices, on a Hilbert space
of the form $\Hilhat$:
\begin{equation*}
%\label{Equation: Dh}
\Dh
:= \begin{bmatrix}
h^{-1/2} & \\ & I_\Hil
\end{bmatrix},
\quad h > 0,
\end{equation*}
and write $\cDh$ for the map $X \mapsto \Dh X \Dh$ on $B(\Hilhat)$.
Such a conjugation provides the correct scaling for quantum
random-walk approximation (\cite{LPrw}).

\begin{thm}
\label{Theorem: 1}
Let $\varphi\in CB(\Bil; B(\khat))$. Suppose that there is a family of maps
$(\psi^{(h)})_{0<h\leq C}$
in $CB(\Bil; B(\khat))$ for some $C>0$, satisfying
\[
\big\| \varphi - \cDh \circ \big( \psi^{(h)} - \iota_{\khat} \circ
\counit \big) \big\|_\cb \to 0
%\text{ in } CB(\Bil; B(\khat))
\text{
as } h\to 0^+.
\]
Then the convolution iterates $\{ \psi^{(h)}_n:= (\psi^{(h)})^{\conv
n} : n\in \ZZ_+\}$ \tu{(}$0 < h < C$\tu{)} satisfy
\begin{equation*}
%\label{Equation: rw approx}
\sup_{t\in[0,T]} \Big\| l^{\varphi}_{t,\ve} - \toy^{(h)}_{[t/h],\ve}
\circ \psi^{(h)}_{[t/h]}  \Big\|_\cb
%{CB(\Bil; |\FFock\ra)}
\to 0 \text{ as } h \to 0^+,
\end{equation*}
for all $T\in\Rplus$ and $\varepsilon \in \Exps$.
\end{thm}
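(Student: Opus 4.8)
The plan is to transport the whole problem from the convolution picture on $\Bil$ to the \emph{standard} quantum stochastic picture, where it becomes a direct instance of Belton's approximation theorem (\cite{Bel}), and then to transport the resulting convergence back. The bridge is the correspondence $\Rmap$/$\Emap$ of \cite{QSCC3}: the coproduct lifting $\Rmap\psi := (\id_\Bil\ot\psi)\circ\coproduct$, which sends a map in $CB(\Bil;B(\khat))$ to a standard generator in $CB(\Bil;\Bil\otM B(\khat))$, together with the counit-slice $\Emap := \counit\otol\id$, which by the counital property inverts it, $\Emap\circ\Rmap = \id$. I would first record the two structural facts needed from \cite{QSCC3}: the iteration compatibility $\Rmap(\psi^{\conv n}) = (\Rmap\psi)^{\bullet n}$, turning convolution iterates into composition iterates, and the recovery formula $l^\varphi_{t,\ve} = \big(\Emap\circ k^\Phi_{t,\ve}\big)\big|_\Bil$ expressing the convolution cocycle as the counit-slice of the standard cocycle $k^\Phi$ generated by $\Phi := \ol{\Rmap\varphi}$ on the ultraweak closure $\ol\Bil$. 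Passing to $\ol\Bil$ with ultraweakly continuous maps is exactly what places the data in the normal operator-space framework of \cite{Lgreifswald} in which Belton's theorem is formulated.

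Second, I would translate the hypothesis. Writing $\Psi^{(h)} := \ol{\Rmap\psi^{(h)}}$, linearity of $\Rmap$, the identity $\Rmap(\iota_\khat\circ\counit) = \iota_\khat\circ\id_{\ol\Bil}$ (i.e.\ $b\mapsto b\ot I_\khat$, which is the $n=0$ composition iterate), and the fact that the scaling $\cDh$ acts only on the noise leg $B(\khat)$ and so commutes with $\Rmap$, give
\[
\Phi - \cDh\circ\big(\Psi^{(h)} - \iota_\khat\circ\id_{\ol\Bil}\big)
= \ol{\Rmap\big(\varphi - \cDh\circ(\psi^{(h)} - \iota_\khat\circ\counit)\big)}.
\]
Since $\coproduct$ is completely contractive, $\Rmap$ and its ultraweak extension are $\cb$-contractive, so the right-hand side tends to $0$ in $\cb$-norm by hypothesis. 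This is precisely Belton's condition for the standard generators.

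Third, I would apply Belton's theorem to obtain $\sup_{t\in[0,T]}\big\|k^\Phi_{t,\ve} - \toy^{(h)}_{[t/h]}\circ(\Psi^{(h)})^{\bullet[t/h]}\big\|_\cb \to 0$, and then apply $\Emap$ to both sides. Using $\Emap\circ\Rmap = \id$ together with the iteration compatibility gives $\Emap\circ(\Psi^{(h)})^{\bullet n} = \psi^{(h)}_n$ on $\Bil$; since $\toy^{(h)}_n$ acts on the noise leg it commutes with the counit-slice $\Emap$, so $\Emap$ carries $\toy^{(h)}_{[t/h]}\circ(\Psi^{(h)})^{\bullet[t/h]}$, sliced at $\ve$, to $\toy^{(h)}_{[t/h],\ve}\circ\psi^{(h)}_{[t/h]}$, and carries $k^\Phi_{t,\ve}$ to $l^\varphi_{t,\ve}$. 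As $\Emap$ is $\cb$-contractive, the convergence transfers with the same uniformity over $t\in[0,T]$, which is the assertion.

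The main obstacle is not the final transfer but the bookkeeping in the first step: one must verify that the several transforms ($\Rmap$, ultraweak extension, the matrix lifting $(\,\cdot\,)\otM\id$, and $\Emap$) genuinely intertwine the convolution and standard discrete evolutions at the level of exponential-vector slices, and that all $\cb$-estimates are uniform in $h$ so the convergence passes cleanly in both directions. In particular one must confirm that Belton's theorem, as stated for standard cocycles, applies verbatim with the concrete operator space $\ol\Bil$ in the role of the initial algebra and with the matrix-space composition iterates $(\Psi^{(h)})^{\bullet n}$ in place of the usual operator iterates; supplying exactly this compatibility is what the framework of \cite{QSCC3} is designed for.
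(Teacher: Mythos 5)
Your proposal is correct and follows essentially the same route as the paper: lift the generators to the standard picture via $\phi=(\id_\Bil\ot\varphi)\circ\coproduct$ (your $\Rmap$), pass to normal extensions on the enveloping von Neumann algebra, apply Belton's Theorem 7.6, and slice back with $\ol{\counit}\,\otol\,\id$ (your $\Emap$) using the recovery formula $l^\varphi_{t,\ve}=\big((\ol{\counit}\,\otol\,\id)\circ k^{\ol{\phi}}_{t,\ve}\big)\big|_\Bil$ and the identity $(\ol{\counit}\,\otol\,\id)\circ\ol{\Psi^{(h)}_n}\big|_\Bil=\psi^{(h)}_n$. The only cosmetic difference is that the paper invokes the fact that $\varphi\mapsto\ol{\phi}$ is a complete \emph{isometry} to equate the two $\cb$-norms, whereas you use only complete contractivity of $\Rmap$, which suffices for the direction needed.
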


\begin{proof}
Set $\phi := (\textup{id} \ot \varphi) \circ \coproduct$ and $\Psi^{(h)} := (\textup{id} \ot \psi^{(h)}) \circ \coproduct$.
%R^\sigma_{B(\khat)}
 Denote the enveloping von Neumann algebra of $\Bil$ by
$\Bilol$ and let $\ol{\phi}, \ol{\Psi^{(h)}} \in \CBuw\big(\Bilol; B(\khat)\big)$ denote respectively the normal extensions of
$\phi$ and $\Psi^{(h)}$ to $\Bilol$. Similarly let $\ol{\counit}:\Bilol \to \mathbb{C}$ denote the normal extension of the
counit.

As the maps transforming $\varphi$  into $\ol{\phi}$ and $\psi^{(h)}$ into $\ol{\Psi^{(h)}}$ are complete isometries (by
Proposition 2.1 and remarks after Theorem 1.2 in \cite{QSCC3}), we have
\[
\big\|
\big( \id_{\Bilol} \otol \cDh \big) \circ
%\big( 1_{\Bilol} \ot \Dh \big)
\big( \Psi^{(h)} - \iota_{\khat}  \big)
%( \cdot ) \big( 1_{\Bilol} \ot \Dh \big)
- \ol{\phi} \big\|_\cb
=
\big\|
\cDh \circ
\big( \psi^{(h)} - \iota_{\khat} \circ \counit \big)
%( \cdot )\Dh
- \varphi \big\|_\cb
\]
which tends to $0$ as $h\to 0^+$. Therefore, by Theorem 7.6 of  \cite{Bel}, it follows that
\[
\sup_{t\in[0,T]} \Big\| \big( \id_{\Bilol} \otol \toy^{(h)}_{[t/h],\ve} \big) \circ \ol{\Psi^{(h)}_{[t/h]}}
 - k^{\ol{\phi}}_{t,\varepsilon}
\Big\|_\cb
%{CB( \Bilol; \Bilol \otol | \FFock \ra )}
\to 0 \text{ as } h \to 0^+,
\]
where $\ol{\Psi_n^{(h)}}:= (\ol{\Psi^{(h)}})^{\bullet n}$ and $k^{\ol{\phi}}$ denotes the `standard' QS cocycle generated by
$\ol{\phi}$,
 that is, the unique weakly regular weak
solution of the QS differential equation
\[
dk_t = k_t \circ d\Lambda_{\ol{\phi}}(t), \quad k_0 = \iota_\FFock
\]
(see~\cite{Lgreifswald}). By the results of  Section 4 of \cite{QSCC3} $ l^{\varphi}_{t,\varepsilon} =
\ol{l}^{\varphi}_{t,\varepsilon}|_{\Bil} $ where $ \ol{l}^{\varphi}_{t,\varepsilon} =
%k^{\ol{\phi}, \counitol}_{t,\varepsilon} =
\
%E^\sigma_{|\FFock\ra}
(\ol{\counit} \ol{\ot} \id) \circ k^{\ol{\phi}}_{t,\varepsilon} $ ($t\in\Rplus, \ve\in\Exps$). The result therefore follows from
the easily checked identity $
(\ol{\counit} \ol{\ot} \id) \circ \ol{\Psi_n^{(h)}}|_\Bil = \psi^{(h)}_n $ ($n\in\ZZ_+$). % which is evident from~\eqref{Equation: ease}.
\end{proof}

\begin{comment}

\begin{rems}
(i) Since multiplicativity of the coproduct plays no role in the above proof, the proper hypothesis for the theorem is that
$\Bil$ be a (multiplier) $\Cstar$-hyperbialgebra.

(ii) The theorem of Belton used in the above proof allows the
hypothesis to be weakened to a condition on columns (like that of
the conclusion), namely $\varphi \in L \big( \khat ; CB\big(\Bil ; |
\khat \ra \big) \big)$ and, for all $c\in\kil$,
\[
\cDh \circ
\big( \psi^{(h)} - \iota_{\khat} \circ \counit \big)
( \cdot )
%\Dh
| \chat\, \ra
\to
\varphi_{\chat}
\text{ in } CB(\Bil; |\khat\ra) \text{ as } h \to 0^+.
\]
The proof of this extension requires only minor modification,
however the result we have given is adequate for our purposes here.
\end{rems}
\end{comment}

For the next two propositions coproducts play no role. Recall
Theorem~\ref{established} on the automatic implementability of
$\chi$-structure maps.
%map~\eqref{Equation: chi structure relation}.

\begin{propn}
\label{Proposition: Y}
Let $(\Al,\chi)$ be a $\Cstar$-algebra with character and let
$\varphi: \Al \to B(\hilhat)$ be a $\chi$-structure map.
Letting $(\pi,\xi)$ be an implementing pair for $\varphi$,
set
\[
U_\xi^{(h)} :=
\begin{bmatrix} c_{h,\xi} & -s_{h,\xi}^* \\
s_{h,\xi} & c_{h,\xi} Q_\xi + Q_\xi^\perp
\end{bmatrix},
\text{ for } h > 0 \text{ such that } h \|\xi\|^2\leq 1,
\]
where
\[
c_{h,\xi} := \sqrt{1 - s_{h,\xi}^* s_{h,\xi}} =
 \sqrt{1 - h \|\xi\|^2},
 s_{h,\xi} = h^{1/2} | \xi \ra,
 \text{ and } Q_\xi:= P_{\Comp\,\xi}.
% = | \xi' \ra \la \xi' |,
\]
%for $\xi' := \| \xi \|^{-1} \xi$ \tu{(}or $0$ if $\xi = 0$\tu{)}.
Then the following hold.
\begin{alist}
\item
Each $U_\xi^{(h)}$ is a unitary operator on $\hilhat$.
\item
The family of *-representations

$\rho^{(h)} = \wh{\pi}^{(h)}_{\xi}: \Bil \to B(\hilhat)$
\tu{(}$h>0,\ h\|\xi\|^2\leq 1$\tu{)}, defined by
\begin{equation*}
%\label{Equation: rho h}
%\Big(\psi^{(h)} =
\wh{\pi}^{(h)}_{\xi}(b) := U_\xi^{(h)\,*} (\chi \op
\pi)(b)U_\xi^{(h)}
%\Big)_{h>0, h\|\xi\|^2\leq 1}
\end{equation*}
satisfies
\begin{equation}
\label{Equaton: difference}
 \varphi - \cDh \circ \big( \rho^{(h)} -
\iota_{\hilhat} \circ \chi \big)
%( \cdot ) \Dh
=
\frac{h}{1 + c_{h,\xi}}\, \varphi_1 -
\frac{h^2}{( 1 + c_{h,\xi})^2}\, \varphi_2
\end{equation}
for some completely bounded maps $\varphi_1, \varphi_2: \Al \to
B(\hilhat)$ independent of $h$.
\item
Each *-representation $\rho^{(h)}$ is nondegenerate if \tu{(}and
only if\tu{)} $\pi$ is.
\end{alist}
\end{propn}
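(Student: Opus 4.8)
The plan is to reduce all three parts to the block-matrix description of $\varphi$ furnished by Theorem~\ref{established}, combined with the elementary relations satisfied by the scalar $c := c_{h,\xi}$, the column $s := s_{h,\xi} = h^{1/2}\ket{\xi}$ and the rank-one projection $Q := Q_\xi$. The relations I would record at the outset are $s^*s = h\norm{\xi}^2 = 1 - c^2$ (a scalar on $\Comp$), $ss^* = h\,\dyad{\xi}{\xi} = (1-c^2)Q$ (using $\dyad{\xi}{\xi} = \norm{\xi}^2 Q$), together with $Q\ket{\xi} = \ket{\xi}$, whence $s^*Q = s^*$, $Qs = s$, $s^*Q^\perp = 0$ and $Q^\perp s = 0$. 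The single scalar identity driving the asymptotics is $1 - c = (1-c^2)/(1+c) = h\norm{\xi}^2/(1 + c_{h,\xi})$.

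For part~(a) I would expand $U_\xi^{(h)\,*}U_\xi^{(h)}$ and $U_\xi^{(h)}U_\xi^{(h)\,*}$ as $2\times2$ operator matrices. The relations above make the diagonal blocks collapse to $c^2 + s^*s = 1$ on $\Comp$ and to $ss^* + (cQ+Q^\perp)^2 = (1-c^2)Q + c^2 Q + Q^\perp = I_\hil$ on $\hil$, while the off-diagonal blocks vanish because the terms carrying $Q$ and $Q^\perp$ exactly cancel the remaining $cs^*$, respectively $cs$, contributions. Hence $U_\xi^{(h)}$ is unitary.

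Part~(b) is the substantive computation and the main obstacle. Writing $\pi = \nu + \iota_\hil\circ\chi$, I would evaluate $\rho^{(h)}(a) = U_\xi^{(h)\,*}(\chi\op\pi)(a)\,U_\xi^{(h)}$ block by block, subtract $\iota_{\hilhat}\circ\chi(a) = \chi(a)I_{\hilhat}$, and then apply $\cDh$, which multiplies the $(1,1)$ block by $h^{-1}$, the two off-diagonal blocks by $h^{-1/2}$, and leaves the $(2,2)$ block unchanged. The delicate point is that, before scaling, the surviving terms in each block carry exactly the compensating power of $h$, so that after scaling nothing of order $h^{-1}$, $h^{-1/2}$ or $h^{0}$ diverges: the $(1,1)$ block of $\rho^{(h)}(a)$ is $c^2\chi(a) + s^*\pi(a)s$ with $s^*\pi(a)s = h\gamma(a) + (1-c^2)\chi(a)$, so subtracting $\chi(a)$ leaves exactly $h\gamma(a)$, which the $h^{-1}$ scaling returns to $\gamma(a)$; and in the remaining blocks every $\chi(a)$-contribution cancels (this is where $Q\ket{\xi} = \ket{\xi}$ is essential), leaving $h^{1/2}\bra{\xi}\nu(a)(cQ+Q^\perp)$, $h^{1/2}(cQ+Q^\perp)\nu(a)\ket{\xi}$ and $(cQ+Q^\perp)\nu(a)(cQ+Q^\perp)$. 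Verifying these cancellations is the crux of the proof. Comparing with the block form of $\varphi$ and writing $cQ + Q^\perp = I_\hil - (1-c)Q$, the difference $\varphi - \cDh\circ(\rho^{(h)} - \iota_{\hilhat}\circ\chi)$ acquires a factor $(1-c)$ on the blocks built from $\bra{\xi}\nu(a)Q$, $Q\nu(a)\ket{\xi}$ and $Q\nu(a)+\nu(a)Q$, together with a factor $-(1-c)^2$ on $Q\nu(a)Q$ in the $(2,2)$ slot. Substituting $1-c = h\norm{\xi}^2/(1+c_{h,\xi})$ and $(1-c)^2 = h^2\norm{\xi}^4/(1+c_{h,\xi})^2$ then yields exactly the asserted identity, with $\varphi_1$ and $\varphi_2$ the $h$-independent block maps assembled from these expressions. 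Their complete boundedness is immediate, since $\nu = \pi - \iota_\hil\circ\chi$ is completely bounded and the remaining operations are compositions with the fixed bounded operators $Q$, $\ket{\xi}$ and $\bra{\xi}$.

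Finally, for part~(c) I would argue conceptually. By part~(a) the operator $U_\xi^{(h)}$ is unitary, so $\rho^{(h)}$ is unitarily equivalent to the *-representation $\chi\op\pi$, and unitary conjugation preserves nondegeneracy; it therefore suffices to observe that $\chi\op\pi$ on $\Comp\op\hil$ is nondegenerate if and only if $\pi$ is. This holds because the closed span of $(\chi\op\pi)(\Al)(\Comp\op\hil)$ is the direct sum of the closures of $\chi(\Al)\Comp$ and $\pi(\Al)\hil$, and $\chi$, being a nonzero character, is automatically nondegenerate; thus the $\Comp$-summand is always recovered and the nondegeneracy of $\chi\op\pi$ is equivalent to $\overline{\pi(\Al)\hil} = \hil$, that is, to nondegeneracy of $\pi$.
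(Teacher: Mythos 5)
Your proposal is correct and follows essentially the same route as the paper: verify unitarity from the identities $c^2+s^*s=1$, $ss^*=(1-c^2)Q$, $s^*Q^\perp=0$; compute the conjugated representation block by block and identify the error terms via $1-c = h\|\xi\|^2/(1+c)$ and $\|\xi\|^2Q = |\xi\rangle\langle\xi|$, arriving at the same $\varphi_1,\varphi_2$; and deduce (c) from unitary equivalence with $\chi\oplus\pi$. The only (cosmetic) difference is that the paper first rewrites $\rho^{(h)}(a)-\chi(a)I_{\hilhat}$ as $U_\xi^{(h)*}\bigl(0\oplus\nu(a)\bigr)U_\xi^{(h)}$, which makes the cancellation of the $\chi(a)$-contributions automatic rather than checked block by block as you do.
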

\begin{proof}
For the proof we drop the subscripts $\xi$, set
\[
Q = Q_\xi, \ c_h = c_{h,\xi}, \ s_h = s_{h,\xi} \text{ and put } d_h
:= c_h - 1.
\]

\noindent
 (a)
%Unitarity of $U_\xi^{(h)}$
This is evident from the identities
\[
c_h^* = c_h, \ c_h^2 + s_h^* s_h = 1, \ s_h^*Q^\perp = 0 \text{ and
} s_h s_h^* = \big( 1 - c_h^2 \big) Q.
\]
%\[
%h^{1/2} | \xi \ra = s(h) \big| \xi' \big\ra, \quad
%Q_\xi =
%\big| \xi' \big\ra \big\la \xi' \big|
%\text{ and }
%c(h)^2 + s(h)^2 = 1.
%\]
\noindent
 (b)
  Set $\nu = \pi - \iota_\hil \circ \chi$ and $\gamma =
\omega_\xi \circ \nu$
%\la\xi, \nu(\cdot)\xi\ra$
so that $\varphi$ has block matrix form~\eqref{Equation: block}.
%and $d_{h,\xi} := c_{h,\xi} - 1$.
Then, noting the identities
\[
d_h = - h \big( 1 + c_h \big)^{-1} \|\xi\|^2,\quad \|\xi\|^2 Q =
|\xi\ra\la\xi|, \quad c_h Q + Q^\perp = d_h Q + I_\hil,
\]
we have

\begin{align*}
&
 \rho^{(h)}(a) - \chi(a) I_{\hilhat}
\\
&=
U_\xi^{(h)\,*} \begin{bmatrix} 0 & \\ & \nu(a) \end{bmatrix} U_\xi^{(h)}
\\&{}
\\
&=
\begin{bmatrix} 0 & h^{1/2} \la\xi| \\
0 & d_h Q + I_\hil \end{bmatrix}
\begin{bmatrix} 0 & 0 \\
h^{1/2}\nu(a)|\xi\ra & d_h \nu(a)Q_\xi + \nu(a) \end{bmatrix}
\\&{}
\\
&=
\begin{bmatrix}
h\gamma(a) &
h^{1/2} \la\xi| \nu(a) \big[ d_h Q + I_\hil \big] \\
& \\
h^{1/2}\big[d_h Q_ + I_\hil\big]\nu(a) |\xi\ra & d_h^2 Q \nu(a)Q +
d_h \big( Q \nu(a) + \nu(a) Q \big) + \nu(a)
\end{bmatrix}
\\&{}
\\
&=
\begin{bmatrix}
 h^{1/2} &  \\ & I_\hil
 \end{bmatrix}
\Big(
 \varphi(a) - h\big(1 + c_h \big)^{-1}\, \varphi_1(a) +
h^2\big(1 + c_h \big)^{-2}\, \varphi_2(a)
 \Big)
\begin{bmatrix}
 h^{1/2} &  \\ & I_\hil
  \end{bmatrix}
\end{align*}
where
\[
\varphi_1 =
\begin{bmatrix} 0 & \gamma(\cdot)\la\xi| \\
\gamma(\cdot)|\xi\ra & X\nu(\cdot) + \nu(\cdot)X \end{bmatrix}
\text{ and }
\varphi_2 =  \gamma(\cdot)
\begin{bmatrix} 0 &  \\ & X \end{bmatrix}
\text{ for } X = |\xi\ra\la\xi|,
\]
from which (b) follows.

\noindent
 (c)
 This is evident from the unitality of each
$U_\xi^{(h)}$ and the fact that $\chi$ is a character.
\end{proof}

\begin{rems}
(i)
 Both $U_\xi^{(h)}$ and $\wh{\pi}^{(h)}_{\xi}$ are
norm-continuous in $h$; they converge to $I_{\hilhat}$ and
$\chi\op\pi$ respectively as $h\to 0^+$.

(ii)
 For the simplest class of $\chi$-structure map, namely
\[
\varphi =
\begin{bmatrix} 0 & \\ & \nu  \end{bmatrix},
\quad \text{ where } \nu = \pi - \iota_\hil \circ \chi \text{ for a
*-homomorphism } \pi: \Al \to B(\hil),
\]
$U_0^{(h)}= I$ and $\wh{\pi}^{(h)}_0 = ( \chi \op \pi )$ so
\[
\wh{\pi}^{(h)}_0 = \varphi + \iota_{\hilhat}, \text{ for all } h>0.
\]

 (iii)
 Unwrapping $\wh{\pi}^{(h)}_\xi(a)$:
\[
\begin{bmatrix}
\chi(a) + h\gamma(a) & &
s_{h,\xi}^*\big( \nu(a) - \frac{h \gamma(a)}{1+c_{h,\xi}} I_\hil \big) \\
\big( \nu(a) - \frac{h \gamma(a)}{1+c_{h,\xi}} I_\hil \big) s_{h,\xi} & &
\pi(a) - \frac{h}{1+c_{h,\xi}}\big(X\nu(a)+\nu(a)X\big) +
\frac{h^2\gamma(a)}{(1+c_{h,\xi})^2} X
\end{bmatrix}
\]
reveals the vector-state realisation
\[
\omega_{e_0} \circ
%\big\la
%e_0,
%\left(\begin{smallmatrix}1 \\ 0 \end{smallmatrix}\right),
%\binom{1}{0},
\wh{\pi}^{(h)}_\xi
%(\cdot) e_0
%\left(\begin{smallmatrix}1 \\ 0 \end{smallmatrix}\right)
%\binom{1}{0}
%\big\ra
=
\omega_{\Omega^{(h)}_\xi} \circ
%\big\la \Omega^{(h)}_\xi,
(\chi \op \pi),
%(\cdot)\Omega^{(h)}_\xi \big\ra,
\text{ where } \Omega^{(h)}_\xi := U_\xi^{(h)} e_0
%\left(\begin{smallmatrix}1 \\ 0 \end{smallmatrix}\right)
%\binom{1}{0},
%=
%\binom{c(h)}{h^{1/2}\xi},
\text{ and } e_0 = \binom{1}{0}\in\hilhat,
\]
for the state $\chi + h \gamma = \chi + h\|\xi\|^2\big(\omega_{\xi'}
\circ \pi - \chi\big)$
%where $\omega_{\xi,\pi} := \big\la \xi', \pi(\cdot) \xi' \big\ra$.
%for $\xi' := \|\xi\|^{-1}\xi$ (if $\xi\neq 0$).
where $\xi'$ equals $\norm{\xi}^{-1} \xi$, or $0$ if $\xi = 0$.
Indeed, finding such a representation was the strategy of proof
in~\cite{FrS}.

 (iv)
 This remark will be used in the proof of Theorem~\ref{Theorem:
discrete approximation}. If instead of being a $\chi$-structure map,
$\varphi$ is given by
\begin{equation}
\label{Equation: instead}
\begin{bmatrix} \la \xi | \\ D^* \end{bmatrix}
\nu( \cdot )
\begin{bmatrix} | \xi \ra & D \end{bmatrix}
\text{ where }
\nu := \pi - \iota_\Hil \circ \chi,
\end{equation}
for a nondegenerate representation $\pi: \Al \to B(\Hil)$, vector
$\xi\in\Hil$ and isometry $D\in B(\hil;\Hil)$ then, replacing the
unitaries $U_\xi^{(h)}$ by the isometries
$
V_{\xi,D}^{(h)}:= U_\xi^{(h)}
\left[\begin{smallmatrix} 1 & \\ & D \end{smallmatrix}\right]
\in B\big(\hilhat; \Hilhat\big)$
in the above proof yields a
family of completely positive preunital maps
 $\rho^{(h)} = \rho_{\pi,\xi,D}^{(h)}$ ($0< h,\ h\|\xi\|^2\leq 1$),
defined by
\[
\psi_{\pi,\xi,D}^{(h)} := V_{\xi,D}^{(h)\,*} (\chi \op \pi)(\cdot )
V_{\xi,D}^{(h)}
\]
satisfying~\eqref{Equaton: difference}, with completely bounded maps
\[
\varphi_1 =
\begin{bmatrix} 0 & \gamma(\cdot)\la\eta| \\
\gamma(\cdot)|\eta\ra & Y^*\nu(\cdot)D + D^*\nu(\cdot)Y \end{bmatrix}
\text{ and }
\varphi_2 =  \gamma(\cdot)
\begin{bmatrix} 0 &  \\ & X \end{bmatrix}
\]
where now $Y=|\xi\ra\la\eta|$, for $\eta = D^*\xi\in\hil$, but $X$
is still equal to $|\xi\ra \la\xi|$.
\end{rems}

\begin{propn}
\label{Proposition: X}
 Let $(\Al,\chi)$ be a $\Cstar$-algebra with
character and let $\varphi \in CB\big(\Al;B(\khat)\big)$. Suppose
that $\varphiol(1)\leq 0$ and $\varphi$ is expressible in the form
\begin{equation}
\label{Equation: decomposition}
\varphi_1-\varphi_2 \text{ where }
\varphi_1 \in CP\big(\Al;B(\khat)\big) \text{ and }
\varphi_2 =
\chi(\cdot) \big( \dQS + |\zeta\ra\la e_0| + |e_0\ra\la\zeta| \big),
\end{equation}
for a vector $\zeta\in\khat$. Then there is $C>0$ and a family of
completely positive contractions $\big(\phi^{(h)}: \Al \to
B(\kilhat)\big)_{0<h\leq C}$,
 such that
\begin{equation}
\label{Equation: dagger}
 \big\| \varphi - \cDh \circ \big(
\phi^{(h)} - \iota_{\hilhat} \circ \chi \big) \big\|_\cb
%( \cdot ) \Dh
\to
%\varphi \text{ in } CB\big(\Al; B(\khat)\big)
0 \text{ as } h\to 0^+.
\end{equation}
\end{propn}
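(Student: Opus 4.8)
The plan is to put $\varphi$ into the form~\eqref{Equation: instead} and then run the construction of Remark~(iv) above, the novelty being that the implementing representation will be \emph{degenerate}. First unpack $\varphi_2$: writing $\zeta = \binom{z}{\eta}$ with $z\in\Comp$, $\eta\in\kil$, a direct computation gives $\varphi_2 = \chi(\cdot)\,M$ with
\[
M = \dQS + \dyad{\zeta}{e_0} + \dyad{e_0}{\zeta} = \begin{bmatrix} 2\re z & \la\eta| \\ |\eta\ra & I_\kil \end{bmatrix}.
\]
Since $\chi(1)=1$ we have $\varphiol(1) = \varphi_1(1) - M$, so the hypothesis $\varphiol(1)\le 0$ says exactly that the \emph{defect} $G := M - \varphi_1(1)$ is a positive operator on $\khat$; observe too that the $(1,1)$-block of $M$ is $I_\kil$.

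The crux is to dilate $\varphi_1$ by a pair $(\pi,R)$ realising \emph{simultaneously} $R^*\pi(\cdot) R = \varphi_1$ and $R^* R = M$. Take a minimal Stinespring dilation $\varphi_1 = R_0^*\pi_0(\cdot) R_0$ on $\Hil_0$, for which $R_0^* R_0 = \varphi_1(1)$, and enlarge it by a degenerate summand carrying the defect: put $\Hil := \Hil_0\op\khat$, let $\pi := \pi_0\op 0$ be $\pi_0$ padded by the zero *-homomorphism, and set $R := \binom{R_0}{G^{1/2}}$, which is legitimate precisely because $G\ge 0$. The zero summand contributes nothing to $R^*\pi(\cdot)R$, so $R^*\pi(\cdot) R = \varphi_1$, whereas $R^* R = R_0^* R_0 + G = M$. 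Writing $\xi := Re_0$, $D := R|_\kil$ and $\nu := \pi - \iota_\Hil\circ\chi$, the identity $R^* R = M$ forces $\|\xi\|^2 = 2\re z$, $D^*\xi = \eta$ and (because the $(1,1)$-block of $M$ is $I_\kil$) $D^* D = I_\kil$, so $D$ is an isometry; moreover $R^*\nu(\cdot) R = R^*\pi(\cdot) R - \chi(\cdot) R^* R = \varphi_1 - \chi(\cdot) M = \varphi$. Thus $\varphi$ is of the form~\eqref{Equation: instead}.

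With this data the maps $\phi^{(h)} := V^{(h)*}(\chi\op\pi)(\cdot) V^{(h)}$, where $V^{(h)} := U_\xi^{(h)}\left[\begin{smallmatrix}1 & \\ & D\end{smallmatrix}\right]$ and $C$ is chosen so that $h\|\xi\|^2\le 1$, are exactly those of Remark~(iv). The computation underlying that remark uses only that $\pi$ is a *-homomorphism, so it applies verbatim and yields~\eqref{Equaton: difference}; since the right-hand side there is $O(h)$ in cb-norm, \eqref{Equation: dagger} follows. Each $\phi^{(h)}$ is completely positive as a conjugate of the *-homomorphism $\chi\op\pi$. The one point where degeneracy of $\pi$ matters is unitality: $V^{(h)}$ is an isometry, so $\phi^{(h)}(1) = V^{(h)*}(\chi\op\pi)(1) V^{(h)}\le V^{(h)*}V^{(h)} = I$, and the inequality is strict exactly because $(\chi\op\pi)(1)$ is now a proper projection. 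Hence the $\phi^{(h)}$ are completely positive contractions rather than the preunital maps of Remark~(iv).

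The main obstacle is the dilation step, and it is genuinely the heart of the matter: for any \emph{nondegenerate} dilation one is forced to have $R^* R = \varphi_1(1)$, which would return a preunital $\phi^{(h)}$ and so could only reach the case $\varphiol(1) = 0$. Accommodating $\varphiol(1) < 0$ therefore requires a degenerate representation, and positivity of the defect $G$---precisely the hypothesis $\varphiol(1)\le 0$---is exactly what lets the zero summand $G^{1/2}$ supply the missing mass in $R^* R = M$ without perturbing $R^*\pi(\cdot) R = \varphi_1$.
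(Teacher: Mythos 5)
Your proof is correct, and the second half of it coincides with the paper's: both arguments ultimately feed a dilation of $\varphi$ into the conjugation-by-$U_\xi^{(h)}$ machinery of Proposition~\ref{Proposition: Y} and compress back to $B(\khat)$ (your $V^{(h)*}(\chi\op\pi)(\cdot)V^{(h)}$ is exactly the compression of the $\rho^{(h)}$ of Proposition~\ref{Proposition: Y} along the isometry $\left[\begin{smallmatrix}1&\\&D\end{smallmatrix}\right]$). Where you genuinely diverge is the dilation step. The paper simply cites Proposition 4.3 and Theorem 4.4 of \cite{S} to produce a $\chi$-structure map $\theta$ on $B(\hilhat)$, $\hil\supseteq\kil$, compressing to $\varphi$, and then invokes Theorem~\ref{established} implicitly to implement $\theta$. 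You instead build the implementing triple $(\pi,\xi,D)$ by hand: minimal (KSGNS/Stinespring) dilation of $\varphi_1$ with $R_0^*R_0=\varphiol_1(1)$, padded by a zero summand carrying $G^{1/2}$ for the defect $G=M-\varphiol_1(1)=-\varphiol(1)\ge 0$, so that $R^*\pi(\cdot)R=\varphi_1$ and $R^*R=M$ simultaneously; the block structure of $M$ then hands you the isometry $D$ and the vector $\xi$ for free, and $R^*\nu(\cdot)R=\varphi_1-\chi(\cdot)M=\varphi$ puts $\varphi$ in the form~\eqref{Equation: instead}. Your observation that the algebraic identity~\eqref{Equaton: difference} in Proposition~\ref{Proposition: Y}\,b and Remark~(iv) never uses nondegeneracy of $\pi$ -- only its multiplicativity -- is accurate and is what licenses the degenerate summand; nondegeneracy is only needed for preunitality, which is precisely what must be given up when $\varphiol(1)<0$. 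What your route buys is self-containedness and transparency: the hypothesis $\varphiol(1)\le 0$ is seen to be exactly positivity of the defect $G$, rather than being absorbed into a citation. What the paper's route buys is brevity and reuse of the existing dilation theory of \cite{S}. One small imprecision worth fixing: for non-unital $\Al$ you should write $\varphiol_1(1)$ (the strict extension evaluated at $1$, which exists by the automatic-strictness remark of Section 1) rather than $\varphi_1(1)$; with that reading your identities $R_0^*R_0=\varphiol_1(1)$ and $G=-\varphiol(1)$ are the correct ones.
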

\begin{proof}
It follows from Proposition 4.3 and Theorem 4.4 of~\cite{S}, and their
proofs, that there is a Hilbert space $\hil$ containing $\kil$ and a
$\chi$-structure map $\theta: \Al \to B(\hilhat)$ such that
$\varphi$ is the compression of $\theta$ to $B(\khat)$.
% = P \theta(\cdot) P$,
%where $P$ is the orthogonal projection onto $\khat \subset \hilhat$.
By Proposition~\ref{Proposition: Y} b, there is some $C>0$ and a family of *-homomorphisms $\big( \rho^{(h)}: \Al\to
B(\hilhat)\big)_{0<h\leq C}$ satisfying
\[
\big\| \theta - \cDh \circ \big( \rho^{(h)} - \iota_{\hilhat} \circ \chi \big) \big\|_{\tu{cb}}
%( \cdot ) \Dh
\to
%\theta \text{ in } CB\big(\Al; B(\hilhat)\big)
0 \text{ as } h\to 0^+.
\]
It follows that~\eqref{Equation: dagger} holds for the compressions
$\phi^{(h)}$
%:= P \wh{\pi}^{(h)}_{\xi}(\cdot) P: \Al \to  B(\khat)$,
of $\rho^{(h)}$ to  $B(\khat)$, which are manifestly completely
positive and contractive.
\end{proof}

%\begin{rem}
%When $\Bil$ and $\kil$ are both assumed to be separable, there is an
%alternative proof of the above result via standard QS cocycles
%(\cite{GL1}).
%\end{rem}

Combining the above results we obtain the following discrete
approximation theorem for QS convolution cocycles.

\begin{thm}
\label{Theorem: discrete approximation}
 Let $l$ be a Markov-regular,
completely positive, contractive quantum stochastic convolution cocycle on a counital $\Cstar$-bialgebra $\Bil$. Then there is
some $C>0$ and a family of completely positive contractions $\big( \psi^{(h)}:\Bil\to B(\khat)\big)_{0< h \leq C}$, such that the
convolution iterates $\big( \psi^{(h)}_n := (\psi^{(h)})^{ \conv n} \big)_{n\in\ZZ_+}$ satisfy
\[
%\begin{equation}
%\label{Equation: rw approx}
\sup_{t\in[0,T]} \Big\| l_{t,\ve} - \toy^{(h)}_{[t/h],\ve} \circ
\psi^{(h)}_{[t/h]} \Big\|_\cb
%{CB(\Bil; |\FFock\ra)}
\to 0 \text{ as
} h \to 0^+,
%\end{equation}
\]
for all $T\in\Rplus$ and $\varepsilon\in\Exps$. Moreover if $l$ is
*-homomorphic, and/or preunital, then each $\psi^{(h)}$ may be
chosen to be so too.
\end{thm}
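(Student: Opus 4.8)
The plan is to reduce everything to the approximation criterion furnished by Theorem~\ref{Theorem: 1}, applied to the stochastic generator of $l$. By the correspondence recalled in Section~\ref{Section: QSCC}, a Markov-regular completely positive contractive QS convolution cocycle is precisely of the form $l^\varphi$ for a unique stochastic generator $\varphi \in CB(\Bil; B(\khat))$, so the first task is to identify this $\varphi$ and read off its structure. Complete positivity and contractivity of $l$, via the generator characterisation (Theorem 5.2 of~\cite{QSCC3}), force $\varphi$ to satisfy the hypotheses of Proposition~\ref{Proposition: X} taken with $(\Al,\chi) = (\Bil,\counit)$: that is, $\varphiol(1)\leq 0$ together with a decomposition $\varphi = \varphi_1 - \varphi_2$ in which $\varphi_1$ is completely positive and $\varphi_2 = \counit(\cdot)\big(\dQS + |\zeta\ra\la e_0| + |e_0\ra\la\zeta|\big)$ for some $\zeta \in \khat$.

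Granting this normal form, Proposition~\ref{Proposition: X} directly supplies a constant $C > 0$ and a family of completely positive contractions $(\psi^{(h)})_{0 < h \leq C}$ in $CB(\Bil; B(\khat))$ for which $\|\varphi - \cDh \circ (\psi^{(h)} - \iota_{\khat}\circ\counit)\|_\cb \to 0$ as $h \to 0^+$. This is exactly the hypothesis of Theorem~\ref{Theorem: 1}, whose conclusion is the asserted uniform convergence of the convolution iterates $\psi^{(h)}_{[t/h]}$ on compact time intervals, for each fixed $\ve \in \Exps$. This establishes the unqualified part of the statement.

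For the two refinements I would replace the generic compression hidden inside Proposition~\ref{Proposition: X} by the more structured constructions attached to Proposition~\ref{Proposition: Y}. If $l$ is *-homomorphic then its generator $\varphi$ is itself a $\counit$-structure map (again by Theorem 5.2 of~\cite{QSCC3}), implemented on $B(\khat)$ by a pair $(\pi,\xi)$ with $\pi\colon\Bil\to B(\kil)$; Proposition~\ref{Proposition: Y} then yields *-homomorphisms $\rho^{(h)} = \psi^{(h)}$ satisfying~\eqref{Equaton: difference}, whose right-hand side is $O(h)$ in cb-norm and so verifies the hypothesis of Theorem~\ref{Theorem: 1}, each $\psi^{(h)}$ being a *-homomorphic, hence completely positive, contraction. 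If instead $l$ is preunital then $\varphiol(1) = 0$, and I would present $\varphi$ as the compression to $B(\khat)$ of a $\counit$-structure map implemented by a pair $(\pi,\xi)$ with $\pi$ nondegenerate, as produced by the construction of~\cite{S} used in the proof of Proposition~\ref{Proposition: X}; Remark~(iv) following Proposition~\ref{Proposition: Y} then delivers completely positive preunital maps $\psi^{(h)}_{\pi,\xi,D}$ satisfying~\eqref{Equaton: difference}, the preunitality coming from the isometry property of $V_{\xi,D}^{(h)}$ combined with the nondegeneracy of $\pi$. When $l$ is simultaneously *-homomorphic and preunital, nondegeneracy of $\pi$ is guaranteed by $\varphitilde(1) = 0$ through Theorem~\ref{established}, and Proposition~\ref{Proposition: Y}(c) then provides nondegenerate, hence preunital, *-homomorphisms.

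The real content, and the step I expect to require the most care, is the passage from the abstract cocycle to the normal form of its generator demanded by Proposition~\ref{Proposition: X}, and, in the preunital case, the selection of implementing data $(\pi,\xi,D)$ that both recovers $\varphi$ by compression and, through the isometry $V_{\xi,D}^{(h)}$, keeps the approximants preunital uniformly in $h$. Once the generator is in the right form the remainder is bookkeeping: the error terms on the right of~\eqref{Equaton: difference} (or in~\eqref{Equation: dagger}) tend to zero in cb-norm, so Theorem~\ref{Theorem: 1} applies verbatim and transports the convergence to the convolution iterates.
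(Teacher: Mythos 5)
Your proposal follows the paper's own proof essentially verbatim: reduce to the generator $\varphi$ via Theorem 5.2 of \cite{QSCC3}, feed the decomposition \eqref{Equation: decomposition} into Proposition~\ref{Proposition: X} and Theorem~\ref{Theorem: 1} for the general case, and handle the *-homomorphic and preunital refinements via Proposition~\ref{Proposition: Y} and the remark containing \eqref{Equation: instead} respectively. The argument is correct and no genuinely different route is taken.
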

\begin{proof}
By Theorem 5.2 a of \cite{QSCC3} we know that $l=l^\varphi$ for some $\varphi\in CB\big(\Bil; B(\khat)\big)$ which has a
decomposition of the form~\eqref{Equation: decomposition}, with $\chi = \counit$. The first part therefore follows from
Proposition~\ref{Proposition: X} and Theorem~\ref{Theorem: 1}. If $l$ is preunital then $\varphi$ may be expressed in the
form~\eqref{Equation: instead} and so, by the remark containing \eqref{Equation: instead}, it follows that the completely
positive maps $\psi^{(h)}$ may be chosen to be preunital.

Now suppose that $l$ is *-homomorphic. Then, by Theorem 5.2 c of \cite{QSCC3}, $\varphi$ is an $\counit$-structure map and so, by
Theorem ~\ref{established}, $\varphi$ has an implementing pair $(\pi,\xi)$ with $\pi$ nondegenerate if $l$ is. It therefore
follows from Proposition~\ref{Proposition: Y} that the maps $\psi^{(h)}$ may be chosen to be *-homomorphic
--- and also nondegenerate if the cocycle $l$ is nondegenerate. This completes the proof.
\end{proof}

Using the standard language of quantum L\'evy processes the last statement of the above theorem can be rephrased as follows.

\begin{cor}
Every Markov-regular quantum L\'evy process on a multiplier $C^*$-bialgebra can be approximated (in its Fock space realisation,
with respect to the pointwise-strong operator topology) by quantum random walks.
\end{cor}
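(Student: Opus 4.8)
The plan is to read this corollary as a restatement, in the probabilistic vocabulary of quantum Lévy processes, of the discrete approximation already established in Theorem~\ref{Theorem: discrete approximation}; the work therefore lies in making the dictionary precise rather than in any new estimate. First I would recall the correspondence, developed in the algebraic theory of Schürmann (\cite{Schurmann}) and lifted to the present topological setting in~\cite{QSCC3}: a Markov-regular quantum Lévy process on $\Bil$ is, in its Fock-space realisation, nothing other than a Markov-regular, *-homomorphic QS convolution cocycle $l$ on $\Bil$, Markov-regularity corresponding to norm-continuity of the associated convolution semigroups. The realisation theorem of~\cite{QSCC3} identifies such a cocycle with $l^\varphi$ for its stochastic generator $\varphi\in CB(\Bil; B(\khat))$.

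Next, since a *-homomorphic cocycle is in particular completely positive and contractive, Theorem~\ref{Theorem: discrete approximation} applies directly. It supplies a constant $C>0$ and a family of completely positive contractions $\big(\psi^{(h)}:\Bil\to B(\khat)\big)_{0<h\leq C}$ whose convolution iterates $\psi^{(h)}_n = (\psi^{(h)})^{\conv n}$ satisfy, for every $T\in\Rplus$ and $\ve\in\Exps$,
\[
\sup_{t\in[0,T]} \big\| l_{t,\ve} - \toy^{(h)}_{[t/h],\ve}\circ\psi^{(h)}_{[t/h]} \big\|_\cb \to 0 \text{ as } h\to 0^+.
\]
Because the cocycle is *-homomorphic, the final clause of that theorem allows each $\psi^{(h)}$ to be taken *-homomorphic, and nondegenerate (hence preunital) whenever the process is. I would then identify the discrete object $\toy^{(h)}_{[t/h]}\circ\psi^{(h)}_{[t/h]}$ as a quantum random walk: the single map $\psi^{(h)}$ is the one-step transition operator, convolution iteration $\psi^{(h)}\mapsto(\psi^{(h)})^{\conv n}$ assembles the $n$-step walk on $\Bil$, and the embedding $\toy^{(h)}_{n}$, arising from the discretisation of Fock space, realises it inside $B(\Fock)$ on the time lattice $\{nh\}$. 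This is exactly the notion of quantum random walk used in~\cite{FrS}.

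Finally I would observe that cb-norm convergence of the maps $\Bil\to|\Fock\ra$ forces, for each fixed $b\in\Bil$, norm convergence in $\Fock$ of $\toy^{(h)}_{[t/h],\ve}\big(\psi^{(h)}_{[t/h]}(b)\big)$ to $l_{t,\ve}(b)$, uniformly on $[0,T]$; this is precisely the convergence in the pointwise-strong operator topology asserted in the statement. The only genuine point requiring care---and the step I expect to be the main obstacle---is the faithful translation of the increment, independence and stationarity axioms defining a quantum Lévy process into the single datum of a *-homomorphic convolution cocycle, together with the check that Markov-regularity is the matching regularity hypothesis. Once this dictionary is in place, the corollary is an immediate transcription of Theorem~\ref{Theorem: discrete approximation}.
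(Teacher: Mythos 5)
Your proposal is correct and matches the paper's intent exactly: the corollary is offered with no separate proof, being presented as a rephrasing of the final statement of Theorem~\ref{Theorem: discrete approximation} in the language of quantum L\'evy processes, and your dictionary (quantum L\'evy process $\leftrightarrow$ *-homomorphic Markov-regular QS convolution cocycle, $\toy^{(h)}_{n}\circ\psi^{(h)}_{n}$ as the embedded random walk, cb-norm convergence on the exponential domain yielding pointwise-strong convergence) is precisely the intended unpacking.
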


%\actual deliberate mistake for compiling latex
\end{document}